\DeclareFontFamily{OT1}{rsfs}{}
\DeclareFontShape{OT1}{rsfs}{n}{it}{<-> rsfs10}{}
\DeclareMathAlphabet{\mathscr}{OT1}{rsfs}{n}{it}
\begin{document}
\theoremstyle{plain}

\newtheorem{theorem}{Theorem}[section]
\newtheorem{thm}[equation]{Theorem}
\newtheorem{prop}[equation]{Proposition}
\newtheorem{cor}[equation]{Corollary}
\newtheorem{conj}[equation]{Conjecture}
\newtheorem{lemma}[equation]{Lemma}
\newtheorem{definition}[equation]{Definition}
\newtheorem{question}[equation]{Question}
\theoremstyle{definition}
\newtheorem{remark}[equation]{Remark}
\theoremstyle{definition}
\newtheorem{example}[equation]{Example}
\numberwithin{equation}{section}

\newcommand{\Hecke}{\mathcal{H}}
\newcommand{\Liea}{\mathfrak{a}}
\newcommand{\Cmg}{C_{\mathrm{mg}}}
\newcommand{\Cinftyumg}{C^{\infty}_{\mathrm{umg}}}
\newcommand{\Cfd}{C_{\mathrm{fd}}}
\newcommand{\Cinftyfd}{C^{\infty}_{\mathrm{ufd}}}
\newcommand{\sspace}{\Gamma \backslash G}
\newcommand{\PP}{\mathcal{P}}
\newcommand{\bfP}{\mathbf{P}}
\newcommand{\bfQ}{\mathbf{Q}}
\newcommand{\Siegel}{\mathfrak{S}}
\newcommand{\g}{\mathfrak{g}}
\newcommand{\A}{{\rm A}}
\newcommand{\B}{{\rm B}}
\newcommand{\Q}{\mathbb{Q}}
\newcommand{\Gm}{\mathbb{G}_m}
\newcommand{\kk}{\mathfrak{k}}
\newcommand{\nn}{\mathfrak{n}}
\newcommand{\tF}{\tilde{F}}
\newcommand{\p}{\mathfrak{p}}
\newcommand{\m}{\mathfrak{m}}
\newcommand{\bb}{\mathfrak{b}}
\newcommand{\Ad}{{\rm Ad}\,}
\newcommand{\ttt}{\mathfrak{t}}
\newcommand{\frakt}{\mathfrak{t}}
\newcommand{\U}{\mathcal{U}}
\newcommand{\Z}{\mathbb{Z}}
\newcommand{\bfG}{\mathbf{G}}
\newcommand{\bfT}{\mathbf{T}}
\newcommand{\R}{\mathbb{R}}
\newcommand{\ST}{\mathbb{S}}
\newcommand{\h}{\mathfrak{h}}
\newcommand{\bC}{\mathbb{C}}
\newcommand{\C}{\mathbb{C}}
\newcommand{\E}{\mathbb{E}}
\newcommand{\F}{\mathbb{F}}
\newcommand{\N}{\mathbb{N}}
\newcommand{\qH}{\mathbb {H}}
\newcommand{\temp}{{\rm temp}}
\newcommand{\Hom}{{\rm Hom}}
\newcommand{\ndeg}{{\rm ndeg}}
\newcommand{\Aut}{{\rm Aut}}
\newcommand{\Ext}{{\rm Ext}}
\newcommand{\Nm}{{\rm Nm}}
\newcommand{\End}{{\rm End}\,}
\newcommand{\Ind}{{\rm Ind}\,}
\def\circG{{\,^\circ G}}
\def\M{{\rm M}}
\def\diag{{\rm diag}}
\def\Ad{{\rm Ad}}
\def\G{{\rm G}}
\def\H{{\rm H}}
\def\SL{{\rm SL}}
\def\PSL{{\rm PSL}}
\def\GSp{{\rm GSp}}
\def\PGSp{{\rm PGSp}}
\def\Sp{{\rm Sp}}
\def\St{{\rm St}}
\def\GU{{\rm GU}}
\def\ind{{\rm ind}}
\def\SU{{\rm SU}}
\def\U{{\rm U}}
\def\GO{{\rm GO}}
\def\GL{{\rm GL}}
\def\PGL{{\rm PGL}}
\def\GSO{{\rm GSO}}
\def\Gal{{\rm Gal}}
\def\SO{{\rm SO}}
\def\O{{\rm  O}}
\def\Sym{{\rm Sym}}
\def\sym{{\rm sym}}
\def\St{{\rm St}}
\def\tr{{\rm tr\,}}
\def\ad{{\rm ad\, }}
\def\Ad{{\rm Ad\, }}
\def\rank{{\rm rank\,}}

\subjclass{Primary 11F70; Secondary 22E55}

\title[Multiplicities under basechange: finite field case]
{      Multiplicities under basechange: finite field case  }

\begin{abstract}
  A general proposition is proved relating multiplicities (for the restriction of a representation of a group to a subgroup) under basechange for groups over finite fields, and used to calculate certain multiplicities for cuspidal representations of general linear groups
  which become principal series representations under basechange for which the multiplicities can be calculated by geometric methods. For $\E/\F$ quadratic extension of finite fields, we use this method to calculate which 1 dimensional representations of $\GL_n(\E)$ appear in a cuspidal representation of $\GL_{2n}(\F)$, for which we also calculate which 1 dimensional representations of $\GL_n(\F) \times \GL_n(\F)$ appear in a cuspidal representation of $\GL_{2n}(\F)$.
  \end{abstract}

\author{Dipendra Prasad}
  \address{Indian Institute of Technology Bombay, Powai, Mumbai-400076} 
\address{Tata Institute of Fundamental
Research, Colaba, Mumbai-400005.}
\email{prasad.dipendra@gmail.com}
\maketitle
    {\hfill \today}
    
\tableofcontents

\section{Introduction}

Let $G$ and $H$ be two connected algebraic groups over a finite field $\F$ of order $q$ with $H \subset G$.
Let $\pi_1$ be an irreducible representation of $G(\F)$ and $\pi_2$ of $H(\F)$. Assume that
both the representations $\pi_1,\pi_2$ have a basechange, denoted as
$\pi_1^{\E}, \pi_2^\E$, to $\E$, where all through the paper, $\E$ is the unique quadratic extension of $\F$, with $\pi_1^{\E}$ an
irreducible representation of $G(\E)$ which is invariant under $ \langle \sigma \rangle = \Gal (\E/\F)$,  and $\pi_2^{\E}$ an irreducible representation of $H(\E)$ which is invariant under $ \langle \sigma \rangle = \Gal (\E/\F)$. In our paper, we will tacitly assume that $G,H$ are reductive algebraic groups over $\F$, and that $\pi_1,\pi_2$ are uniform representations, i.e., a virtual sum of Deligne-Lusztig representation $R(T,\theta)$ (for varying maximal tori $T \subset G$
and characters
$\theta: T(\F)\rightarrow \C^\times$). As an example, note that all irreducible representations of $\GL_n(\F)$ and of $\U_n(\F)$ are uniform representations. For such uniform representations, existence of basechange is a well-known theorem due to Digne-Michel \cite{DM}.

Our usage of the basechange depends mostly with the Shintani
character identity relating twisted character of $\pi^\E$ at $g \in G(\E)$ with the ordinary character of $\pi$
at the norm of the element $g$ which is a well-defined conjugacy class in $G(\F)$, denoted $\Nm(g)$.

The aim of this paper is to relate the multiplicity,
$$m(\pi_1,\pi_2)= \dim \Hom_{H(\F)}(\pi_1, \pi_2),$$
with the multiplicity of the basechanged representations:
$$m(\pi_1^{\E},\pi_2^{\E})= \dim \Hom_{H(\E)}(\pi_1^{\E}, \pi_2^{\E}).$$

It is well-known that basechange allows one to simplify representations, and therefore
allows one, in some cases, to calculate $m(\pi_1,\pi_2)$ from the simpler
information $m(\pi_1^{\E},\pi_2^{\E})$.

This paper itself is inspired by one such application where $G= \GL_{2n}(\F)$, and $H= \GL_{n}(\E)$ sitting naturally inside $G$, the representation $\pi_1$ being a cuspidal representation of $G= \GL_{2n}(\F)$, and $\pi_2 = \chi$, a one-dimensional representation of $H= \GL_{n}(\E)$ arising from a character
$\chi : \E^\times \rightarrow \C^\times$ through the determinant map
$\det:  \GL_{n}(\E) \rightarrow \E^\times$. (We use the same notation $\chi$ to denote a
character of $\E^\times$ as well as the associated character of $\GL_n(\E)$).
The question that we wish to understand is
the multiplicity $m(\pi_1,\chi)$.

Observe that over $\E$, $H(\E) \subset G(\E)$ is $\GL_n(\E) \times \GL_n(\E) \subset
\GL_{2n}(\E)$, and the representation $\pi_1^\E$  of $ \GL_{2n}(\E)$ becomes a principal series representation
$\pi \times \pi^\sigma$ induced from the $(n,n)$ parabolic of  $ \GL_{2n}(\E)$ with Levi subgroup
$\GL_n(\E) \times \GL_n(\E)$, where $\pi$ is the cuspidal representation of $\GL_n(\E)$ associated
to the same character $\theta: \F_{\!q^{2n}}^\times \rightarrow \C^\times$ which is used to define the
cuspidal representation $\pi_1$ of $G= \GL_{2n}(\F)$.

Basechange thus allows one to reduce a question on cuspidal representations to one on principal series representations which can be treated by `geometric' methods, as we show in this paper in one illustrative case.

\section {Multiplicity under basechange} \label{bc}

We keep the notation introduced in the introduction, thus $H \subset G$
are connected algebraic groups over a finite field $\F$,
$\pi_1$  an irreducible representation of $G(\F)$ and $\pi_2$ of $H(\F)$. We assume that
both the representations $\pi_1,\pi_2$ have a basechange, denoted as
$\pi_1^{\E}, \pi_2^\E$, to $\E$, with $\pi_1^{\E}$ an
irreducible representation of $G(\E)$
which is invariant under $ \langle \sigma \rangle = \Gal (\E/\F)$,  and $\pi_2^{\E}$ an irreducible representation of $H(\E)$ which is invariant under $ \langle \sigma \rangle = \Gal (\E/\F)$. We will fix an extension
$\tilde{\pi_1}^{\E}$  of
the irreducible representation  $\pi_1^{\E}$ of $G(\E)$ to
$G(\E) \rtimes \langle \sigma \rangle $; similarly, fix an extension $\tilde{\pi_2}^{\E}$  of
the irreducible representation  $\pi_2^{\E}$ of $H(\E)$ to
$H(\E) \rtimes \langle \sigma \rangle $. (In fact, the Shintani character identity fixes a unique extension of
$\pi_1^\E$ and $\pi_2^\E$ to the representation $\tilde{\pi_1}^{\E}$  of
$G(\E) \rtimes \langle \sigma \rangle $ and to the representation $\tilde{\pi_2}^{\E}$ of
$H(\E) \rtimes \langle \sigma \rangle $.)

The following proposition, much more general than Theorem 1 in \cite{Pr},
has the same proof as there. 

\begin{prop} \label{mult} With the notation as above, we have:
$$2 m(\tilde{\pi_1}^{\E}, \tilde{\pi_2}^{\E}) =  m({\pi_1}^{\E}, {\pi_2}^{\E}) +  m({\pi_1}, {\pi_2}).$$
  \end{prop}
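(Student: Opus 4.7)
The plan is to compute $m(\tilde{\pi_1}^{\E}, \tilde{\pi_2}^{\E})$ directly via the Schur orthogonality / inner product formula for characters on the finite group $H(\E) \rtimes \langle \sigma \rangle$, and then to split the sum over this group into its two cosets $H(\E)$ and $H(\E)\sigma$. That is, I would start from
$$m(\tilde{\pi_1}^{\E}, \tilde{\pi_2}^{\E}) = \frac{1}{2|H(\E)|}\sum_{x \in H(\E) \rtimes \langle \sigma \rangle} \chi_{\tilde{\pi_1}^{\E}}(x)\, \overline{\chi_{\tilde{\pi_2}^{\E}}(x)},$$
and write this as a sum of two pieces, one over $x = h \in H(\E)$ and one over $x = h\sigma$ with $h \in H(\E)$.

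The first piece is immediate: on $H(\E)$ the characters of $\tilde{\pi_i}^{\E}$ agree with the characters of $\pi_i^{\E}$, so this contribution equals $\tfrac{1}{2} m(\pi_1^{\E}, \pi_2^{\E})$. For the twisted piece, I would invoke the Shintani character identity, which (for the distinguished extensions $\tilde{\pi_i}^{\E}$) asserts that $\chi_{\tilde{\pi_i}^{\E}}(h\sigma) = \chi_{\pi_i}(\Nm(h))$ where $\Nm(h) \in H(\F)$ is well-defined up to $H(\F)$-conjugacy. The twisted piece then becomes
$$\frac{1}{2|H(\E)|} \sum_{h \in H(\E)} \chi_{\pi_1}(\Nm(h))\, \overline{\chi_{\pi_2}(\Nm(h))}.$$
To finish, one converts this sum over $h \in H(\E)$ into a sum over $g \in H(\F)$ via the norm map, invoking the Lang--Steinberg theorem: $\sigma$-conjugacy classes in $H(\E)$ biject with ordinary conjugacy classes in $H(\F)$ under $\Nm$, and the $\sigma$-centralizer of $h \in H(\E)$ has the same order as the $H(\F)$-centralizer of $\Nm(h)$ (both being the $\F$-points of the same reductive centralizer group, via Lang's theorem applied to this connected centralizer). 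Consequently for any class function $f$ on $H(\F)$,
$$\frac{1}{|H(\E)|}\sum_{h \in H(\E)} f(\Nm(h)) = \frac{1}{|H(\F)|} \sum_{g \in H(\F)} f(g),$$
and applied to $f(g) = \chi_{\pi_1}(g)\overline{\chi_{\pi_2}(g)}$ this rewrites the twisted piece as $\tfrac{1}{2} m(\pi_1, \pi_2)$. Adding the two pieces and multiplying by $2$ gives the claimed identity.

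The routine input is the Schur orthogonality computation and the splitting into cosets; the genuine content sits in two places. The first is the use of the \emph{specific} Shintani-normalized extensions $\tilde{\pi_i}^{\E}$, since an arbitrary extension to the semidirect product would differ by a sign and break the equation. The second, and the real obstacle, is the norm-equidistribution identity above; this requires the hypothesis that $H$ is connected reductive over $\F$ so that Lang--Steinberg applies to $H$ and to the centralizers of the semisimple parts of norms, ensuring both that $\Nm$ surjects onto $H(\F)$-conjugacy classes and that the fiber over each class has exactly the expected size $|H(\E)|/|Z_{H(\F)}(g)|$. Once this equidistribution is in hand the proposition is a formal consequence.
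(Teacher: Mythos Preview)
Your proof is correct and follows essentially the same route as the paper's: Schur orthogonality on $H(\E)\rtimes\langle\sigma\rangle$, splitting into the two cosets, identifying the untwisted piece with $\tfrac12 m(\pi_1^{\E},\pi_2^{\E})$, and handling the twisted piece via the Shintani character identity together with the equality of cardinalities of $\sigma$-centralizers in $H(\E)$ and ordinary centralizers in $H(\F)$ (the paper cites this last fact as Lemma~2 of \cite{Pr}). The only cosmetic difference is that the paper phrases the Shintani identity on $G(\E)\times H(\E)$ with the product norm, keeping separate the $G$-norm used for $\pi_1$ and the $H$-norm used for $\pi_2$; your formulation with a single $\Nm$ on $H(\E)$ is equivalent since for $h\in H(\E)\subset G(\E)$ the $H$-norm class sits inside the $G$-norm class and $\chi_{\pi_1}$ is a $G(\F)$-class function.
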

\begin{proof}
  Recall that by the Schur orthogonality theorem, if $V$ is a representation of a finite group
  ${\mathcal H}$ with character $\Theta$, then
  $$\dim V^{\mathcal H} = \frac{1}{|{\mathcal H}|} \sum_{h \in {\mathcal H}} \Theta(h).$$

  We will apply the Schur orthogonality theorem to the representation $V$
of the group ${\mathcal H}= H(\E) \rtimes \langle \sigma \rangle $
    which is the restriction of the representation
  $(\tilde{\pi_1}^{\E})^\vee \otimes \tilde{\pi_2}^{\E}$ of the group 
  $ [G(\E) \rtimes \langle \sigma \rangle] \times [H(\E) \rtimes \langle \sigma \rangle]$ to the diagonally embedded subgroup ${\mathcal H}= H(\E) \rtimes \langle \sigma \rangle $.

  Note that  ${\mathcal H}= H(\E) \rtimes \langle \sigma \rangle = H(\E) \cup
  H(\E) \cdot \sigma  $, hence the sum of characters on ${\mathcal H}$ decomposes as a sum over
  $H(\E)$ and another sum over $ H(\E) \cdot \sigma  $. By the Shintani character identity
  which involves the norm mapping
  $$\Nm:  G(\E) \times H(\E) \rightarrow  G(\F) \times H(\F),$$
  the character of $(\tilde{\pi_1}^{\E})^\vee \otimes \tilde{\pi_2}^{\E}$
  at $ (g,h) \cdot \sigma \in [G(\E) \times H(\E)] \cdot \sigma$ is the character
  of the representation $({\pi_1})^\vee \otimes {\pi_2}$ at the element
  $ (\Nm g, \Nm h) \in G(\F) \times H(\F) $. The mapping $\Nm:  G(\E) \times H(\E) \rightarrow  G(\F) \times H(\F)$ has the well-known property that the cardinality of
  $\sigma$-centralizer of an element $ (g,h) \in G(\E) \times H(\E)$ is the same
  as the cardinality of the centralizer of  $ (\Nm g, \Nm h)$ in $G(\F) \times H(\F) $
  (cf. Lemma 2 in \cite{Pr}),
  allowing one to conclude the proposition. \end{proof}

\begin{remark}
  Observe that since  $m(\tilde{\pi_1}^{\E}, \tilde{\pi_2}^{\E})$ is the dimension of the space of
  ${\mathcal H}= H(\E) \rtimes \langle \sigma \rangle $ invariant linear vectors in the space
 $\Hom_\C(\tilde{\pi_1}^{\E}, \tilde{\pi_2}^{\E})$, and 
  $ m({\pi_1}^{\E}, {\pi_2}^{\E})$ is the dimension of the space of
  $H(\E) $ invariant linear vectors in the same space,
  $$0 \leq m(\tilde{\pi_1}^{\E}, \tilde{\pi_2}^{\E}) \leq m({\pi_1}^{\E}, {\pi_2}^{\E}).$$
  \end{remark}

\begin{cor} \label{mult-cor} With the notation as above,
  $$m({\pi_1}, {\pi_2}) \leq m({\pi_1}^{\E}, {\pi_2}^{\E}),$$
  and $$m({\pi_1}, {\pi_2}) \equiv m({\pi_1}^{\E}, {\pi_2}^{\E}) \bmod 2.$$
  In particular, if  $m({\pi_1}^{\E}, {\pi_2}^{\E}) \leq 1$, then $m({\pi_1}, {\pi_2}) \leq 1$, and
$$m({\pi_1}^{\E}, {\pi_2}^{\E}) = m({\pi_1}, {\pi_2}).$$
\end{cor}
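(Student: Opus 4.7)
The plan is to deduce everything from the identity of Proposition \ref{mult},
$$2 m(\tilde{\pi_1}^{\E}, \tilde{\pi_2}^{\E}) = m({\pi_1}^{\E}, {\pi_2}^{\E}) + m({\pi_1}, {\pi_2}),$$
combined with the sandwich $0 \leq m(\tilde{\pi_1}^{\E}, \tilde{\pi_2}^{\E}) \leq m({\pi_1}^{\E}, {\pi_2}^{\E})$ recorded in the Remark. There is no real conceptual obstacle; the task is simply to make the bookkeeping explicit.

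First I would rearrange the proposition as
$$m(\pi_1, \pi_2) = 2 m(\tilde{\pi_1}^{\E}, \tilde{\pi_2}^{\E}) - m({\pi_1}^{\E}, {\pi_2}^{\E}).$$
The right-hand side differs from $-m({\pi_1}^{\E}, {\pi_2}^{\E})$ by an even integer, which immediately gives the congruence $m(\pi_1, \pi_2) \equiv m({\pi_1}^{\E}, {\pi_2}^{\E}) \pmod{2}$.

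Next, to obtain the inequality $m(\pi_1, \pi_2) \leq m({\pi_1}^{\E}, {\pi_2}^{\E})$, I would substitute the upper bound $m(\tilde{\pi_1}^{\E}, \tilde{\pi_2}^{\E}) \leq m({\pi_1}^{\E}, {\pi_2}^{\E})$ from the Remark into the rearranged identity, producing
$$m(\pi_1, \pi_2) \leq 2 m({\pi_1}^{\E}, {\pi_2}^{\E}) - m({\pi_1}^{\E}, {\pi_2}^{\E}) = m({\pi_1}^{\E}, {\pi_2}^{\E}).$$

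Finally, for the ``in particular'' clause, if $m({\pi_1}^{\E}, {\pi_2}^{\E}) = 0$ then the first inequality forces $m({\pi_1}, {\pi_2}) = 0$, while if $m({\pi_1}^{\E}, {\pi_2}^{\E}) = 1$ then that inequality restricts $m({\pi_1}, {\pi_2})$ to $\{0, 1\}$ and the parity condition singles out $m({\pi_1}, {\pi_2}) = 1$. In either case the two multiplicities agree, which completes the corollary.
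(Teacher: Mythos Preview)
Your proof is correct and is precisely the argument the paper intends: the corollary is stated without proof because it follows immediately from Proposition~\ref{mult} together with the Remark's inequality $0 \leq m(\tilde{\pi_1}^{\E}, \tilde{\pi_2}^{\E}) \leq m({\pi_1}^{\E}, {\pi_2}^{\E})$, exactly as you have written out.
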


The following corollary of Proposition \ref{mult} is Theorem 1 in \cite{Pr}.

\begin{cor} Let $G$ be a connected reductive group over a finite field $\F$, $\E/\F$ a quadratic extension,
  and $\pi$ an irreducible uniform 
representation of $G(\E)$, i.e., one  which can be expressed as a sum of Deligne-Lusztig 
representations of $G(\E)$ induced from tori.  Then the representation $\pi$ has a 
$G(\F)$ fixed vector if and only if $\pi^\sigma \cong \pi^\vee$. If $\pi^\sigma \cong \pi^\vee$, then $\pi$ has a one dimensional space of fixed vectors
under $G(\F)$, and the representation $\pi \otimes \pi^\sigma$ which is canonically a representation of $[G(\E)  \times G(\E)] \rtimes \Z/2$ 
has a $G(\E) \rtimes \Z/2$ fixed vector. \end{cor}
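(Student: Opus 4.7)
The plan is to apply Proposition~\ref{mult} to the Weil restriction $G^\ast := R_{\E/\F}(G_\E)$ as the ambient $\F$-group, with subgroup $H^\ast := G$ embedded via the unit of adjunction. One has $G^\ast(\F) = G(\E)$ and $H^\ast(\F) = G(\F)$ (the standard inclusion), while $G^\ast(\E) = G(\E) \times G(\E)$ with $\sigma$ acting by $(g_1,g_2)\mapsto(\sigma g_2,\sigma g_1)$ and $H^\ast(\E) \subset G^\ast(\E)$ the diagonal. I would take $\pi_1 := \pi$, viewed as a uniform representation of $G^\ast(\F) = G(\E)$ --- uniformity passes through because the maximal $\F$-tori of $G^\ast$ are of the form $R_{\E/\F}(S)$ for maximal $\E$-tori $S$ of $G$, and the Deligne--Lusztig representations of $G^\ast(\F)$ thereby agree with those of $G(\E)$, so Digne--Michel basechange is available --- and take $\pi_2 := 1$, the trivial character of $G(\F)$.

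The heart of the argument is the identity $\pi_1^\E = \pi \boxtimes \pi^\sigma$, with canonical extension to $G^\ast(\E) \rtimes \langle\sigma\rangle$ implementing $\sigma$ as the swap $v\otimes w \mapsto w\otimes v$ on $V_\pi \otimes V_\pi$. This is pinned down by the Shintani character identity: the norm map for $G^\ast$ sends $(g_1,g_2)$ to (the conjugacy class of) $g_1\sigma(g_2) \in G(\E) = G^\ast(\F)$, so Shintani requires $\chi_{\widetilde{\pi_1^\E}}((g_1,g_2)\sigma) = \chi_\pi(g_1\sigma g_2)$, and this is precisely the trace of $(\pi(g_1)\otimes\pi(\sigma g_2))\circ \mathrm{swap}$ on $V_\pi \otimes V_\pi$.

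Plugging in, Proposition~\ref{mult} specializes to
\[
 2\,\dim(\pi\otimes\pi^\sigma)^{G(\E)\rtimes\langle\sigma\rangle} \;=\; \dim\Hom_{G(\E)}(\pi^\sigma,\pi^\vee) + \dim\pi^{G(\F)},
\]
and by Schur's lemma the first term on the right equals $1$ when $\pi^\sigma\cong\pi^\vee$ and $0$ otherwise. Combined with the bound $\dim(\pi\otimes\pi^\sigma)^{G(\E)\rtimes\langle\sigma\rangle} \le \dim\Hom_{G(\E)}(\pi^\sigma,\pi^\vee)$ from the remark after Proposition~\ref{mult}, the only nonnegative integer solutions to the displayed equation are $(0,0,0)$ and $(1,1,1)$: hence $\dim\pi^{G(\F)} = 0$ when $\pi^\sigma\not\cong\pi^\vee$, and $\dim\pi^{G(\F)} = \dim(\pi\otimes\pi^\sigma)^{G(\E)\rtimes\langle\sigma\rangle} = 1$ when $\pi^\sigma\cong\pi^\vee$. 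This proves all three clauses of the corollary simultaneously.

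The main obstacle is the basechange identity $\pi_1^\E = \pi\boxtimes\pi^\sigma$ together with its canonical extension: while the character check sketched above pins it down, it rests on careful bookkeeping of the Galois action on $G^\ast(\E)$, of the norm map for $G^\ast$, and of the uniqueness of the canonical extension supplied by Shintani.
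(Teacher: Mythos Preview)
Your proof is correct and is precisely the way the paper intends the corollary to be read off from Proposition~\ref{mult}: the paper gives no argument beyond stating that this is a corollary, and the Weil-restriction setup $G^\ast = R_{\E/\F}(G_\E)$, $H^\ast = G$, $\pi_1 = \pi$, $\pi_2 = 1$ is the natural (and essentially unique) way to place the situation into the framework of that proposition. Your identification of $\pi_1^\E$ with $\pi \boxtimes \pi^\sigma$ and of the canonical $\sigma$-extension with the swap operator, together with the integer-solution analysis of the resulting identity, recovers all three clauses exactly as stated.
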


\begin{example}
  Here is a simple example of the way multiplicities vary under basechange as dictated by
  Proposition \ref{mult}. If $P(\chi) = P(\chi^{-1})$
  denotes the irreducible principal series representation
  of $\PGL_2(\F)$ associated to a character $\chi: \F^\times \rightarrow \C^\times$, $\chi^2 \not = 1$,
  then it is easy to see that for any  three  characters
  $\chi_1,\chi_2,\chi_3: \F^\times \rightarrow \C^\times$, with $\chi_i^2 \not = 1$, and $P(\chi_i)$ distinct,
  $m(P(\chi_1) \otimes P(\chi_2), P(\chi_3)) =1$ except when
  $\chi_1^{\pm 1} \chi_2^{\pm 1} \chi_3^{\pm 1} = 1$, 
  and that in these exceptional cases,  $m(P(\chi_1) \otimes P(\chi_2), P(\chi_3))
  =2$. On the other hand,  if $D(\chi) = D(\chi^{-1})$  
  denotes the irreducible cuspidal representation
  of $\PGL_2(\F)$ associated to a character $\chi: \E^\times/\F^\times \rightarrow \C^\times$,
  $\chi^2 \not = 1$,
  then it is easy to see that for any 3  characters $\chi_1,\chi_2,\chi_3:
  \E^\times/\F^\times \rightarrow \C^\times$, with $\chi_i^2 \not = 1$, and $D(\chi_i)$ distinct,
  $m(D(\chi_1) \otimes D(\chi_2), D(\chi_3)) =1$
  except when
$\chi_1^{\pm 1} \chi_2^{\pm 1} \chi_3^{\pm 1} = 1$,
  and that in these exceptional cases,  $m(D(\chi_1) \otimes D(\chi_2), D(\chi_3)) =0$.
  \end{example}
\section{Linear periods for Principal series} \label{linear-ps}

Following is the main proposition of the paper proved by geometric means.

\begin{prop} \label{princ}
  Let $\pi_1,\pi_2$ be two irreducible cuspidal representations of $\GL_n(\F)$ and $\chi_1,\chi_2$ be two characters of $\, \F^\times$.
  Then the representation $\pi_1 \times \pi_2$ of $\GL_{2n}(\F)$
  (parabolically induced from the $(n,n)$-parabolic)
  has a nonzero vector on which $\GL_n(\F) \times \GL_n(\F)$
  operates by the character $\chi_1 \times \chi_2$
   if and only if one of the two conditions hold:
\begin{enumerate}
 \item $(\pi_1 \otimes \chi_1^{-1})^\vee \cong  \pi_2 \otimes \chi_2^{-1}.$
 \item $n=2m$ is even, $\pi_1$ contains a vector on which  $\GL_m(\F) \times \GL_m(\F)$
   operates via $\chi_1 \times \chi_2$ and $\pi_2$ contains a vector on which  $\GL_m(\F) \times \GL_m(\F)$
   operates via $\chi_1 \times \chi_2$.
   \end{enumerate}
  If $n>1$, then the dimension of the space of vectors in $\pi_1 \times \pi_2$ on which $\GL_n(\F) \times \GL_n(\F)$
  operates by the character $\chi_1 \times \chi_2$ is the sum of dimensions arising from these two options, first one when
  it holds contributes dimension 1, and the second one contributes $m(\pi_1, \chi_1 \times \chi_2) \cdot m(\pi_2, \chi_1 \times \chi_2)$.
      \end{prop}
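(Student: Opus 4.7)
The plan is to realize $\pi_1 \times \pi_2 = \Ind_P^G(\pi_1 \boxtimes \pi_2)$, where $G = \GL_{2n}(\F)$ and $P$ is the $(n,n)$-parabolic whose Levi $L = \GL_n(\F) \times \GL_n(\F)$ coincides with the subgroup $H$ against which we compute periods. By Mackey (equivalently, the geometric decomposition of $G/P$ by $H$-orbits on the Grassmannian of $n$-planes in $\F^{2n}$), I would write $\Hom_H(\pi_1 \times \pi_2, \chi_1 \boxtimes \chi_2)$ as a sum of contributions $\Hom_{H_V}(\sigma_V, \chi|_{H_V})$, one per orbit representative $V$. Here $H_V = H \cap P_V$ is the stabilizer, $P_V$ the $G$-stabilizer of $V$ with Levi $M_V = \GL(V) \times \GL(V^c)$, and $\sigma_V$ is $\pi_1 \boxtimes \pi_2$ viewed as a representation of $M_V$ (via any $g \in G$ with $gV_1 = V$) and pulled back along $H_V \hookrightarrow P_V \twoheadrightarrow M_V$. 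The $H$-orbits are parametrized by $(a, b) = (\dim(V \cap V_1), \dim(V \cap V_2))$ with $a, b \geq 0$ and $a + b \leq n$, where $V_1, V_2$ are the block subspaces fixed by the two factors of $H$.

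Next I would analyze the orbits. The open orbit $(0, 0)$, where $V$ is the graph of an isomorphism $V_1 \cong V_2$, has $H_V$ equal to a diagonal $\GL_n(\F)$, giving $\Hom_{\GL_n(\F)}(\pi_1 \otimes \pi_2, \chi_1 \chi_2)$; by Schur's lemma this has dimension one exactly when $(\pi_1 \otimes \chi_1^{-1})^\vee \cong \pi_2 \otimes \chi_2^{-1}$, i.e.\ under condition (1). The two closed orbits $V = V_1$ and $V = V_2$ give $\Hom(\pi_i, \chi_i) = 0$ since each cuspidal $\pi_i$ has dimension greater than one. For $(a, b)$ with $0 < a + b < n$, the nontrivial graph part of $V$ forces the image of $H_V$ in $M_V$ to contain nontrivial unipotent subgroups of $\GL(V^c) \cong \GL_n$; since $\chi$ is trivial on unipotents while a cuspidal $\pi_i$ has no invariants under a proper unipotent radical, these orbits contribute zero. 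For the split orbits $(a, n-a)$ with $0 < a < n$ one has $V = W_1 \oplus W_2$ with $W_i \subset V_i$, and $H_V = P_a(V_1) \times P_{n-a}(V_2)$; here the unipotent radicals lie in the kernel of $H_V \to M_V$, so the contribution collapses to
\[
\Hom_{\GL_a(\F) \times \GL_{n-a}(\F)}(\pi_1, \chi_1 \boxtimes \chi_2) \cdot \Hom_{\GL_{n-a}(\F) \times \GL_a(\F)}(\pi_2, \chi_1 \boxtimes \chi_2),
\]
which for $a = n - a = m$ becomes precisely $m(\pi_1, \chi_1 \times \chi_2) \cdot m(\pi_2, \chi_1 \times \chi_2)$, matching condition (2).

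The main obstacle is the vanishing of the split-orbit contribution when $a \neq n - a$: one must show that a cuspidal representation of $\GL_n(\F)$ admits no asymmetric linear period, i.e.\ $\Hom_{\GL_a(\F) \times \GL_b(\F)}(\pi, \eta_1 \boxtimes \eta_2) = 0$ for cuspidal $\pi$ and $a + b = n$ with $a \neq b$. Writing $\pi = \pm R(T_{\mathrm{an}}, \theta)$ with $T_{\mathrm{an}}$ the Coxeter torus and $\theta: \F_{q^n}^\times \to \C^\times$ in general position, the Deligne-Lusztig character formula shows $\Theta_\pi$ is supported on elements whose semisimple part is conjugate into $T_{\mathrm{an}}$; for $g \in \GL_a(\F) \times \GL_b(\F) \subset \GL_n(\F)$ the general position hypothesis on $\theta$ forces the semisimple part of $g$ to be central in $\GL_n(\F)$, so the orbital integral computing the Hom reduces to a product of a central-character sum with a Green function sum over the unipotent variety of $\GL_a \times \GL_b$. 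The latter vanishes when $a \neq b$ by a standard identity for Green functions of the Coxeter torus. Summing contributions across all orbits then produces the claimed dimension formula.
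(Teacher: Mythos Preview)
Your overall architecture matches the paper's: Mackey theory via $H$-orbits on the Grassmannian, indexed by $(a,b)=(\dim V\cap V_1,\dim V\cap V_2)$, with the open orbit giving condition (1), the intermediate orbits killed by cuspidality, and the ``split'' orbits $(a,n-a)$ reducing to Levi periods on each $\pi_i$. That part is fine and essentially identical to the paper.

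The genuine gap is in your treatment of the asymmetric case $a\neq b$ with $a+b=n$. The paper simply cites Proposition 2.14 of \cite{Se} for the fact that a cuspidal $\pi$ of $\GL_n(\F)$, $n>1$, admits no $(\chi_1\boxtimes\chi_2)$-period for $\GL_a\times\GL_b$ unless $a=b$. You instead try to prove this directly from the Deligne--Lusztig character formula, asserting that for $g\in\GL_a(\F)\times\GL_b(\F)$ the general-position hypothesis on $\theta$ forces the semisimple part $g_s$ to lie in the centre of $\GL_n(\F)$. This is false whenever $\gcd(a,b)>1$. Concretely, take $n=6$, $a=2$, $b=4$: for any $\alpha\in\F_{q^2}^\times\setminus\F_q^\times$, the element $s=(\alpha;\alpha,\alpha)$ (embedded via the anisotropic tori of $\GL_2$ and $\GL_4$) is a non-central semisimple element of $\GL_2(\F)\times\GL_4(\F)$ whose characteristic polynomial is the cube of an irreducible quadratic, hence $s$ is $\GL_6(\F)$-conjugate into the Coxeter torus $\F_{q^6}^\times$, and $\Theta_\pi(s)\neq 0$ in general. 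The general-position condition on $\theta$ controls the Galois stabilizer of $\theta$ on $\F_{q^n}^\times$, not the vanishing of $\sum_\gamma\theta^\gamma$ on the subfield $\F_{q^d}^\times$, so it does not rescue the claim. Your subsequent reduction to a ``Green function sum over the unipotent variety of $\GL_a\times\GL_b$'' therefore does not go through as stated, and the asserted ``standard identity'' would in any case need to be made precise. Either quote the Secherre result as the paper does, or supply a correct argument that handles all divisors $d\mid\gcd(a,b)$.

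A minor point: you do not treat $n=1$. There the closed orbits $V=V_1$, $V=V_2$ do contribute (the $\pi_i$ are characters), and the multiplicity can be $2$; the paper handles this case separately by an explicit character computation on the diagonal torus of $\GL_2(\F)$.
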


\begin{proof}
  We begin the proof by first taking care of the case $n=1$. Thus, suppose $\pi_1= \lambda_1$ and $\pi_2= \lambda_2$ are
  two characters of $F^\times$, and $\lambda_1 \times \lambda_2$ is the corresponding principal series
  representation of $\GL_2(F)$. As is well-known, the character of the principal series
  representation $\lambda_1 \times \lambda_2$ of $\GL_2(F)$ 
  at the diagonal element $(x,y)$ of  $\GL_2(F)$
  is equal to $\lambda_1(x)\lambda_2(y) + \lambda_2(x)\lambda_1(y)$ (for $x \not = y$), and is equal to $(q+1) \lambda_1(x) \lambda_2(x)$ at the central elements $(x,x)$. It follows that  the principal series
  representation $\lambda_1 \times \lambda_2$ (assumed to be irreducible, equivalently, $\lambda_1 \not =  \lambda_2$)
  of $\GL_2(F)$ contains all characters $\mu_1 \times \mu_2$ of
  $F^\times \times F^\times$ with $\mu_1 \mu_2 = \lambda_1 \lambda_2$ with multiplicity 1 except for the characters  $\lambda_1 \times \lambda_2$  and $\lambda_2 \times \lambda_1$ which appear with multiplicity 2. This proves the proposition
  in the case $n=1$.

  In the rest of the proof we will assume that $n>1$, pointing out exactly when this hypothesis is first used.
  We will calculate the dimension of the space of vectors in $\pi_1 \times \pi_2$ on
  which ${\H}= \GL_n(\F) \times \GL_n(\F)$
  operates by the character $\chi_1 \times \chi_2$ 
    by a direct application of the Mackey theory. Recall that Mackey theory
gives an answer using the double coset decomposition
$${\H} \backslash \GL_{2n}(\F) / P$$
where $P $ is the $(n,n)$ parabolic in $\GL_{2n}(\F)$.

Suppose $V = V_1 \oplus V_2$ is the decomposition of a $2n$-dimensional vector space over $\F$
as a direct sum of two $n$-dimensional subspaces, which realizes
${\H}= \GL_n(\F) \times \GL_n(\F)$ as $\H=\GL(V_1) \times \GL(V_2)$.
Let $W$ be an $n$-dimensional subspace
of $V$ whose  stabilizer $P=P(W)$ defines a  parabolic subgroup of $\GL(V)$.

Clearly,  the double cosets in ${\H} \backslash \GL(V) / P(W)$ are in bijective correspondence with $\GL(V)$-conjugacy classes of
triples $(V_1,V_2,W)$  of subspaces of dimension $n$  in $V$ with $V_1+V_2=V$.  
From this it is easy to see that the double cosets in ${\H} \backslash \GL(V) / P$ are
parametrized by pairs of integers $(r,s)$ with $0\leq r,s \leq n$ with the only constraint that $r+s \leq n$. The pair $ (r,s)$ corresponds to the pair
$(\dim(W\cap V_1), \dim (W\cap V_2))$.

To make a detailed calculation, let $V_1,V_2,W,W'$ be subspaces, each of dimension $n$,
of a vector space $V$ of dimension $2n$, with the following basis vectors for the subspaces
 $V_1,V_2,W,W'$ for integers $r \geq 0,s \geq 0,t \geq 0$ with $r+s+t=n$, and $W\oplus W'=V$:
\begin{eqnarray*}
  V_1 & = & \{e_1,e_2,\cdots, e_r; g_1,\cdots, g_t; v_1,\cdots, v_s\}  ,\\
  V_2 & = & \{f_1,f_2,\cdots, f_s; h_1,\cdots, h_t; w_1,\cdots, w_r\} ,\\
  W & = & \{e_1,\cdots, e_r; f_1,\cdots, f_s; g_1+h_1, \cdots, g_t+h_t\}, \\
  W' & = & \{g_1,\cdots, g_t; v_1,\cdots, v_s; w_1,\cdots, w_r\}.
  \end{eqnarray*}
We have,
\begin{eqnarray*}
  \dim(V_1 \cap W) & = & r ,\\
  \dim(V_2 \cap W) & = & s ,
\end{eqnarray*}
which, since $V_1+V_2=V$, implies that,
  \begin{eqnarray*}
  \dim(V_1 \cap [V_2+ W]) & = & n-s = r+t, \\
  \dim(V_2 \cap [V_1 + W]) & = & n-r= s+t.
\end{eqnarray*}

  To apply the Mackey theory, we need to calculate $\A= [\GL(V_1) \times \GL(V_2)] \cap P(W)$, and its projection $\B$ to $P(W)/U(W)$ where $U(W)$ is the unipotent radical of $P(W)$, so that this double coset gives the representation,

  $${\rm Ind}_{\A} ^{[\GL(V_1) \times \GL(V_2)]} \rho,$$
    where $\rho$ is the representation of the subgroup $\A$  of $\GL(V_1) \times \GL(V_2)$
    operating through the restriction to $\B$ of the representation $\pi_1 \times \pi_2$
    of $P(W)/U(W) = \GL(W) \times \GL(W')$.

    We now calculate $\A,\B$ with      $$\A= [\GL(V_1) \times \GL(V_2)] \cap P(W) = \left \{g\in \GL(V)| g(V_1)=V_1, g(V_2)=V_2, g(W)=W \right \}.$$
    Note that an element $g \in \A$ leaves  $(V_1 \cap W), (V_2 \cap W), (V_1 \cap [V_2+ W]),
    (V_2 \cap [V_1 + W])$     invariant.

    To understand  the subgroup $\B$, we will write an element $g \in \A \subset
    \GL(V)$ in the basis of $V = W\oplus W'$ afforded by concatenation of the basis for $W,W'$ which we recall has the following basis,
\begin{eqnarray*}
  W & = & \{e_1,\cdots, e_r; f_1,\cdots, f_s; g_1+h_1, \cdots, g_t+h_t\}, \\
  W' & = & \{g_1,\cdots, g_t; v_1,\cdots, v_s; w_1,\cdots, w_r\},
  \end{eqnarray*}
    in the form:
$$g = \left ( \begin{array}{cccccr} 
  * & 0  & B  & B  & * & 0  \\
0 & * & * & 0 & 0 & * \\
0 & 0 & A & 0 & 0 & -C \\
0 & 0 & 0 & A & * & C \\
0&0  & 0 & 0 & *&0 \\
0& 0& 0& 0& 0& * 
\end{array}
\right ) \in \GL(V),$$
where each entry corresponds to a block matrix, for example, the entry at place $(1,1)$ corresponds to the endomorphism of the subspace $V_1 \cap W = \{e_1,\cdots, e_r\}$; all the entries denoted by $A,B,C, *$ are arbitrary matrices of appropriate sizes. 

It follows that in $P(W)/U(W)= \GL(W) \times \GL(W')$, $g$ looks like,

$$g = \left ( \begin{array}{cccccc} 
  * & 0  & B  &   &  &   \\
0 & * & * &  &  &  \\
0 & 0 & A &  &  &  \\
 &  &  & A & * & C \\
&  &  & 0 & *&0 \\
& & & 0& 0& * 
\end{array}
\right ) \in \GL(W) \times \GL(W'),$$
and once again, all the nonzero entries in the matrix are arbitrary (and invertible if necessary).

If
$0< r+s<n$, the above subgroup of matrices contains the unipotent subgroup of a nontrivial
parabolic in
$\GL(W) \times \GL(W')$. Therefore since we are dealing with parabolic induction arising
from a cuspidal data, the double cosets represented by $(r,s)$ with $0< r+s<n$, do not carry any vector left invariant (up to a character) by the subgroup  ${\H} = \GL_n(\F) \times \GL_n(\F)$.

If $r=0, s=0$, then
the matrix $g$ above simplifies to, 
$$g = \left ( \begin{array}{cccc} 
  \alpha & 0      \\
0 & \alpha &  
\end{array}
\right ) \in \GL(W) \times \GL(W'),$$
where $\alpha$ represent an arbitrary matrix in $\GL_n(\F)$.
In this case, ${\rm Ind}_{\A} ^{[\GL(V_1) \times \GL(V_2)]} (\pi_1 \otimes \pi_2)|_{\A},$
has a nonzero vector on which $\GL(V_1) \times \GL(V_2)$ acts by the character
$\chi_1 \times \chi_2$ if and only if

$$(\pi_1 \otimes \chi_1^{-1})^\vee \cong  \pi_2 \otimes \chi_2^{-1}.$$

If
$r+s=n$,
the matrix $g$ above simplifies to,

$$g = \left ( \begin{array}{cccc} 
  *_r & 0  &  &     \\
0 & *_s &  &    \\
 &  & *_s & 0  \\
 &  & 0 & *_r  \\
\end{array}
\right ) \in \GL(W) \times \GL(W'),$$
where $*_r,$ respectively $*_s$, represent an arbitrary matrix in $\GL_r(\F)$, respectively in $\GL_s(\F)$.

Appeal now to the well-known result that a cuspidal representation $\pi$
of $\GL_n(\F), n> 1$ has a nonzero vector on which the Levi subgroup $\GL_r(\F) \times \GL_s(\F)$ for $r+s=n$ acts by a character only if $r=s$, in particular $n$ must be even, see for example Proposition 2.14 of
\cite{Se}; it is here that we need $n>1$.

The proof of the proposition in case $n> 1$ is completed after having observed that a block diagonal matrix
$$g = \left ( \begin{array}{cccc} 
  A &   &  &     \\
 & B &  &    \\
 &  & C &   \\
 &  &  & D  \\
\end{array}
\right ) \in \H= \GL_n(\F) \times \GL_n(\F),$$
(where each of the matrices $A,B,C,D$ are of size $n/2$),  when considered as an element of
$\GL(W) \times \GL(W'),$ looks like:
$$g = \left ( \begin{array}{cccc} 
  A &   &  &     \\
 & C &  &    \\
 &  & B &   \\
 &  &  & D  \\
\end{array}
\right ) .$$
  \end{proof} 

\section{Linear periods for cuspidal representations} \label{linear-cusp}

Let $M= \GL_n(\F) \times \GL_n(\F) $ be the standard Levi subgroup inside $\GL_{2n}(\F)$. The aim of this section is to
understand the characters of $M$ which appear in a cuspidal representation of $\GL_{2n}(\F)$. Here is the main theorem
of this section which gives a complete understanding of this question.

\begin{thm} \label{linear-period}
  Let $\pi = \pi(\theta)$ be a cuspidal representation of $\GL_{2n}(\F)$ associated to a character
  $\theta: \F^\times_{q^{2n}} \rightarrow \C^\times$. Let $\chi_1,\chi_2$ be a pair of characters of $\F^\times$
  identified to a pair of characters of $\GL_n(\F)$ through the determinant map, and hence a character
  $\chi_1 \times \chi_2: M= \GL_n(\F) \times \GL_n(\F) \rightarrow \C^\times$. Then the character $\chi_1 \times \chi_2$
  appears in $\pi$ if and only if $(\chi_1\cdot \chi_2)(\Nm x) = \theta(x)$ for all elements
  $x \in   \F^\times_{q^{n}}  \subset \F^\times_{q^{2n}}$ where $\Nm x$ is the norm mapping
  $\Nm:  \F^\times_{q^{n}}  \rightarrow \F^\times_{q};$ in particular, the cuspidal representation $\pi$
  of $\GL_{2n}(\F)$ contains a character of $M$ if and only if $\theta$ restricted to $ \F^\times_{q^{n}}$ factors through
  the norm mapping  ($\Nm:  \F^\times_{q^{n}}  \rightarrow \F^\times_{q}$). Further,
  $$m(\pi, \chi_1 \times \chi_2) \leq 1.$$
  \end{thm}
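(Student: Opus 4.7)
The plan is to induct on $n$, using basechange to $\E$ as the main tool. By the introduction, the basechange of $\pi=\pi(\theta)$ is the irreducible principal series $\pi^\E = \pi_\E(\theta) \times \pi_\E(\theta^\sigma)$ (parabolically induced from $\GL_n(\E)\times\GL_n(\E)$), where $\pi_\E(\theta)$ is the cuspidal of $\GL_n(\E)$ attached to $\theta:\F_{q^{2n}}^\times\to\C^\times$ (with $\E\subset \F_{q^{2n}}$) and $\theta^\sigma(x)=\theta(x^q)$; correspondingly, $\chi_1\times\chi_2$ basechanges to $\chi_1^\E\times\chi_2^\E$ on $\GL_n(\E)\times\GL_n(\E)$, with $\chi_i^\E:=\chi_i\circ\Nm_{\E/\F}$. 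By Corollary~\ref{mult-cor} it suffices to show $m(\pi^\E,\chi_1^\E\times\chi_2^\E)\le 1$ and to identify when it equals $1$. Applying Proposition~\ref{princ} to $\pi^\E$ splits this multiplicity into two contributions: (i) case (1) of that proposition (the \emph{duality} contribution, of dimension $\le 1$), and (ii) case (2) (the \emph{recursive} contribution, nonzero only when $n=2m$ is even, equal to $m(\pi_\E(\theta),\chi_1^\E\times\chi_2^\E)\cdot m(\pi_\E(\theta^\sigma),\chi_1^\E\times\chi_2^\E)$).

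The core is the analysis of (i). Using the cuspidal twist rule $\pi_\E(\psi)\otimes(\mu\circ\det)\cong \pi_\E(\psi\cdot(\mu\circ\Nm_{\F_{q^{2n}}/\E}))$, the isomorphism criterion $\pi_\E(\psi)\cong\pi_\E(\psi^{q^{2k}})$ for cuspidals of $\GL_n(\E)$, and the transitivity $\Nm_{\E/\F}\circ\Nm_{\F_{q^{2n}}/\E}=\Nm_{\F_{q^{2n}}/\F}$, condition (i) rewrites as: there exists $k\in\{0,\ldots,n-1\}$ with
$$\theta^{1+q^{2k+1}}\;=\;(\chi_1\chi_2)\circ\Nm_{\F_{q^{2n}}/\F}\quad\text{as characters of }\F_{q^{2n}}^\times.$$
Since the right-hand side is $\Gal(\F_{q^{2n}}/\F)$-invariant, Galois invariance forces $\theta^{(q-1)(1+q^{2k+1})}=1$, so the order of $\theta$ divides $\gcd(q^{2n}-1,(q-1)(q^{2k+1}+1))$. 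An elementary gcd manipulation (using the standard formula $\gcd(q^{2n}-1,q^{2k+1}+1)=q^d+1$ for $d=\gcd(n,2k+1)$, together with $(q-1)\mid(q^d-1)$) shows this divides $(q-1)(q^d+1)\mid q^{2d}-1$. The regularity of $\theta$ (forced by cuspidality of $\pi(\theta)$) then rules out $d<n$, so $d=n$; since $2k+1$ is odd and $\le 2n-1$, this forces $n$ odd and $k=(n-1)/2$. Substituting, $x^{1+q^n}=\Nm_{\F_{q^{2n}}/\F_{q^n}}(x)$, collapsing (i) to $\theta|_{\F_{q^n}^\times}=(\chi_1\chi_2)\circ\Nm_{\F_{q^n}/\F}$, the theorem's condition. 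Hence (i) contributes $0$ when $n$ is even and $1$ (iff the theorem's condition) when $n$ is odd.

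For (ii), when $n=2m$, the inductive hypothesis applied to the cuspidal $\pi_\E(\theta)$ of $\GL_{2m}(\E)$ yields multiplicity $\le 1$, with equality iff $\theta|_{\F_{q^n}^\times}=(\chi_1^\E\chi_2^\E)\circ\Nm_{\F_{q^n}/\E}$, which by the same norm transitivity equals the theorem's condition; the analogous condition for $\pi_\E(\theta^\sigma)$ reduces to the same criterion, since $(\chi_1\chi_2)\circ\Nm_{\F_{q^n}/\F}$ on $\F_{q^n}^\times$ is Frobenius-invariant. So (ii) contributes $0$ or $1$, with value $1$ exactly under the theorem's condition. The base case $n=1$ is a direct character calculation: the character of cuspidal $\pi(\theta)$ of $\GL_2(\F)$ vanishes on regular split diagonal elements and equals $(q-1)\theta(x)$ at $xI$, giving $\chi_1\times\chi_2\hookrightarrow\pi(\theta)$ iff $\theta|_{\F^\times}=\chi_1\chi_2$, with multiplicity $1$, as the theorem predicts for $n=1$ (where $\Nm$ is the identity). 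Combining everything, $m(\pi^\E,\chi_1^\E\times\chi_2^\E)\in\{0,1\}$ with value $1$ precisely under the theorem's condition, and Corollary~\ref{mult-cor} transports both the bound and the criterion to $m(\pi,\chi_1\times\chi_2)$. The main obstacle throughout is the gcd/regularity analysis in (i), which forces the parity dichotomy driving the induction.
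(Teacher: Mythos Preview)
Your argument is correct and takes a genuinely different route from the paper's. The paper proves Theorem~\ref{linear-period} directly, without basechange or induction: it invokes Theorem~\ref{deg-whi} (quoted from \cite{Pr2}) to identify the nondegenerate-Whittaker piece $\pi^{\ndeg}$ as the $M$-module $\ind_{\F_{q^n}^\times}^{\GL_n(\F)\times\GL_n(\F)}(\theta|_{\F_{q^n}^\times})$, from which Frobenius reciprocity immediately gives the stated criterion with multiplicity one; it then shows via a Clifford-theory argument that the complementary piece $\pi^{\deg}$ carries no character of $M$ (any such character, combined with the shape of the stabiliser $M_\lambda$ of a degenerate character $\lambda$ of $N$, would produce a nonzero vector fixed by the unipotent radical of a proper parabolic of $\GL_{2n}(\F)$, contradicting cuspidality). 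Your approach instead bootstraps from the $\GL_2$ base case by basechange and Proposition~\ref{princ}, with your gcd/regularity analysis of contribution~(i) --- essentially a twisted form of Proposition~\ref{mult1} --- forcing the odd/even dichotomy so that exactly one of the two contributions in Proposition~\ref{princ} is ever active; this gives $m(\pi^\E,\chi_1^\E\times\chi_2^\E)\le 1$ and Corollary~\ref{mult-cor} transports both the bound and the criterion back to $\F$. Your proof has the virtue of being self-contained within the paper (no appeal to \cite{Pr2}) and of illustrating the basechange philosophy; indeed the paper uses precisely this strategy one level up, deducing the twisted-linear case (Theorem~\ref{conj-PT}) \emph{from} Theorem~\ref{linear-period}. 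The paper's proof, by contrast, is non-inductive and yields the finer structural fact that all linear periods of $\pi$ already live in $\pi^{\ndeg}$.
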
 

The proof of this theorem will depend on the following theorem of the author from \cite{Pr2} which is Theorem 1 there.

\begin{thm} \label{deg-whi}Let $\pi = \pi(\theta)$ be a cuspidal representation of $\GL_{2n}(\F)$ associated to a character
  $\theta: \F^\times_{q^{2n}} \rightarrow \C^\times$. Let $P$ be the $(n,n)$ parabolic in  $\GL_{2n}(\F)$ with $M= \GL_n(\F) \times \GL_n(\F) $ its standard Levi subgroup, and $N = \M_n(\F)$ its unipotent radical. Let $\psi_0: \F \rightarrow \C^\times$
  be a fixed non-trivial character on $\F$, and $\psi: \M_n(\F) \rightarrow \C^\times$ be the character $\psi(X) = \psi_0(\tr X)$. Then $\pi_{N,\psi}$, the largest subspace of $\pi$ on which $N$ operates by $\psi$, is a representation space for
$\Delta (\GL_n(\F)) \subset \GL_n(\F) \times \GL_n(\F) $, and as a $\GL_n(\F)$-module  given by:
  $$\pi_{N,\psi} = \ind_{  \F^\times_{q^{n}}}^{\GL_n(\F)} (\theta|_{  \F^\times_{q^{n}}}). $$

  \end{thm}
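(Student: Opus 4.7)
The plan is to match characters on both sides as $\Delta(\GL_n(\F))$-modules, where the main work is a Deligne--Lusztig character computation for $\pi(\theta)$ at block-upper-triangular elements.

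\textbf{Step 1: Set up the character of $\pi_{N,\psi}$.}
The subgroup $\Delta(\GL_n(\F)) \subset M$ normalises $N$ via $(g,g) X (g,g)^{-1} = gXg^{-1}$ and stabilises $\psi$ because $\tr(gXg^{-1}) = \tr X$; hence $\pi_{N,\psi}$ is a $\GL_n(\F)$-module. Applying the $\psi$-isotypic projector $P_\psi = |N|^{-1}\sum_{n \in N} \overline{\psi(n)}\,\pi(n)$ and taking trace against $(g,g)$ gives the character formula
$$
\chi_{\pi_{N,\psi}}(g) \;=\; \frac{1}{|N|}\sum_{X \in \M_n(\F)} \overline{\psi_0(\tr X)}\,\chi_{\pi(\theta)}\!\left(\begin{pmatrix} g & gX \\ 0 & g \end{pmatrix}\right), \qquad g \in \GL_n(\F).
$$

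\textbf{Step 2: Identify the support via cuspidality.}
A cuspidal character of $\GL_{2n}(\F)$ vanishes on any element whose semisimple part has characteristic polynomial that is not a power of a single irreducible polynomial of $\F[t]$, since otherwise that semisimple element lies in the Levi of a proper parabolic. Since the characteristic polynomial of $\begin{pmatrix}g & gX\\ 0&g\end{pmatrix}$ equals $\det(tI_n - g)^2$, cuspidality forces $\det(tI_n - g)$ itself to be a power of a single irreducible $p \in \F[t]$; equivalently, $g$ is conjugate in $\GL_n(\F)$ to an element of the embedded anisotropic torus $\F^\times_{q^n} \subset \GL_n(\F)$. This matches the support of $\chi_{\ind_{\F^\times_{q^n}}^{\GL_n(\F)}(\theta|_{\F^\times_{q^n}})}(g) = \frac{1}{q^n - 1}\sum_{h \in \GL_n(\F),\, h^{-1}gh \in \F^\times_{q^n}} \theta(h^{-1}gh)$.

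\textbf{Step 3: Evaluate on the support.}
For $g$ conjugate to $t \in \F^\times_{q^n}$ with minimal polynomial of degree $d \mid n$, the Deligne--Lusztig formula expresses $\chi_{\pi(\theta)}\!\begin{pmatrix}g & gX\\ 0&g\end{pmatrix}$ as a sum of Galois conjugates of $\theta$ (inherited from the anisotropic torus $\F^\times_{q^{2n}}$) evaluated at the semisimple part, multiplied by a Green function of the centraliser $\GL_{2n/d}(\F_{q^d})$ evaluated at the unipotent part of the Jordan decomposition. The weighted sum $\sum_X \overline{\psi_0(\tr X)}(\cdot)$ is then a Fourier transform on $\M_n(\F)$ which collapses the Green-function contribution and produces exactly the sum of Galois conjugates of $\theta|_{\F^\times_{q^n}}$ at $t$ --- matching the value of $\chi_{\ind_{\F^\times_{q^n}}^{\GL_n(\F)}(\theta|_{\F^\times_{q^n}})}$ on $g$. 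Comparing on every $\GL_n(\F)$-conjugacy class then yields the asserted isomorphism of $\GL_n(\F)$-modules.

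\textbf{Main obstacle.}
The delicate step is the Fourier / Green-function evaluation in Step 3: as $X$ varies over $\M_n(\F)$ the Jordan type of $\begin{pmatrix}g & gX\\ 0&g\end{pmatrix}$ changes, and one has to track how the Green functions of $\GL_{2n/d}(\F_{q^d})$ depend on that Jordan type while summing against $\psi_0(\tr X)$. The resulting Gauss-type sum, together with the careful bookkeeping of signs, rank invariants and normalising constants in the Deligne--Lusztig formula, is the substance of the argument of \cite{Pr2}.
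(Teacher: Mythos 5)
First, a point of reference: this paper does not prove the statement at all -- it imports it verbatim as Theorem~1 of \cite{Pr2} and uses it as an input to Theorem \ref{linear-period} -- so there is no internal argument to compare against, and your proposal has to stand on its own. As it stands it is an outline of a plausible strategy rather than a proof. Step 1 (the isotypic projector and the resulting character formula for $\pi_{N,\psi}$ on $\Delta(\GL_n(\F))$) is correct and standard. Step 2 already contains a real imprecision: the Deligne--Lusztig/Green vanishing criterion constrains only the \emph{semisimple part} of $\left(\begin{smallmatrix} g & gX \\ 0 & g\end{smallmatrix}\right)$, hence only forces the characteristic polynomial of $g$ to be a power of a single irreducible; this is \emph{not} equivalent to ``$g$ is conjugate into $\F^\times_{q^n}$'' unless $g$ is semisimple (e.g.\ a nontrivial unipotent $g$ has characteristic polynomial $(t-1)^n$ but is not conjugate into the anisotropic torus). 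Since the character of $\ind_{\F^\times_{q^n}}^{\GL_n(\F)}(\theta|_{\F^\times_{q^n}})$ vanishes on every non-semisimple class, part of what must be proved is precisely that the Fourier sum over $X$ kills the contribution of such $g$; your ``support matching'' does not establish this.

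The decisive gap is Step 3, which is where the entire content of the theorem lives and which you do not carry out: you describe the shape of the computation (Green functions of the centralizer $\GL_{2n/d}(\F_{q^d})$ evaluated at a unipotent part whose Jordan type varies with $X$, summed against $\overline{\psi_0(\tr X)}$) and then state that the resulting Gauss-type sum ``is the substance of the argument of \cite{Pr2}.'' Deferring the central evaluation to the very reference whose theorem you are reproving is not a proof. To close the gap one must, for each semisimple $t$ whose minimal polynomial has degree $d \mid n$, actually evaluate the $X$-sum of Green-function values, track the signs and normalizing constants in the Deligne--Lusztig formula, and verify that the result equals the value of the induced character on the class of $t$ (and equals $0$ off the semisimple classes of $\F^\times_{q^n}$). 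None of that bookkeeping is routine, and none of it is present. So the proposal identifies a workable route -- and, as far as one can tell, the same character-theoretic route taken in \cite{Pr2} -- but it has a genuine, acknowledged hole exactly at the step that constitutes the theorem.
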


We note the following corollary of this theorem as a simple consequence of the Clifford theory applied to the
group $P=MN$ containing the abelian normal subgroup $N$.

\begin{cor} \label{non-deg-cor}
 Let $\pi = \pi(\theta)$ be a cuspidal representation of $\GL_{2n}(\F)$ associated to a character
 $\theta: \F^\times_{q^{2n}} \rightarrow \C^\times$.  Let $P$ be the $(n,n)$ parabolic in  $\GL_{2n}(\F)$ with $M= \GL_n(\F) \times \GL_n(\F) $ its standard Levi subgroup, and $N = \M_n(\F)$ its unipotent radical. Then the subspace $\pi^{\ndeg}$  of $\pi$
 on which $N$ operates only by non-degenerate characters of $N$ is a module for $M= \GL_n(\F) \times \GL_n(\F) $ given by
 $$ \pi^{\ndeg} = \ind_{  \F^\times_{q^{n}}}^{\GL_n(\F) \times \GL_n(\F)} (\theta|_{  \F^\times_{q^{n}}}), $$
 where $ \F^\times_{q^{n}}$ is the diagonally embedded torus inside  $\GL_n(\F) \times \GL_n(F) $.
\end{cor}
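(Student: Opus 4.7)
The plan is to derive the corollary formally from Theorem \ref{deg-whi} via Clifford theory applied to the abelian normal subgroup $N$ of $P = MN$.

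First, I would identify the characters of $N = \M_n(\F)$ with $\M_n(\F)$ itself by the non-degenerate pairing $(X,Y) \mapsto \psi_0(\tr(YX))$, so each character has the form $\psi_Y(X) = \psi_0(\tr(YX))$ for a unique $Y \in \M_n(\F)$, and is non-degenerate exactly when $Y \in \GL_n(\F)$. Next I would compute the $M$-action on $N$ (namely $(g_1,g_2) \cdot X = g_1 X g_2^{-1}$) and transport it to characters, obtaining $(g_1,g_2)\cdot \psi_Y = \psi_{g_2 Y g_1^{-1}}$. From this it is immediate that $M = \GL_n(\F) \times \GL_n(\F)$ acts transitively on the set of non-degenerate characters of $N$, and the stabilizer of the distinguished character $\psi = \psi_I$ is exactly the diagonal subgroup $\Delta \GL_n(\F)$.

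With these two facts in hand, standard Clifford theory for $P = MN$ with $N$ abelian normal gives an $M$-module decomposition
$$\pi^{\ndeg} = \bigoplus_{\psi' \in M\cdot \psi} \pi_{N,\psi'} \;\cong\; \ind_{\Delta \GL_n(\F)}^{\GL_n(\F) \times \GL_n(\F)} \pi_{N,\psi},$$
where $\pi_{N,\psi}$ is viewed as a $\Delta\GL_n(\F)$-module (the $M$-action on the right-hand side is well-defined precisely because $\Delta\GL_n(\F)$ is the stabilizer of $\psi$). Finally, I would substitute Theorem \ref{deg-whi}, which identifies $\pi_{N,\psi}$ as the $\Delta\GL_n(\F) \cong \GL_n(\F)$-module $\ind_{\F^\times_{q^{n}}}^{\GL_n(\F)}(\theta|_{\F^\times_{q^{n}}})$, and then apply induction in stages (noting that $\F^\times_{q^{n}}$ sits inside $\GL_n(\F)$ and hence, via the diagonal, inside $\GL_n(\F) \times \GL_n(\F)$) to collapse the double induction into the single induction claimed.

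There is no real obstacle here; the argument is entirely formal once Theorem \ref{deg-whi} is granted. The one step that deserves care is verifying the transitivity of the $M$-action on non-degenerate characters and the computation of the stabilizer of $\psi_I$, since everything else reduces to Mackey/Clifford bookkeeping and induction in stages.
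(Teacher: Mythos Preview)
Your proposal is correct and follows exactly the approach the paper indicates: the paper simply states that the corollary is ``a simple consequence of the Clifford theory applied to the group $P=MN$ containing the abelian normal subgroup $N$,'' and you have spelled out precisely those details (transitivity of $M$ on non-degenerate characters, stabilizer $\Delta\GL_n(\F)$ of $\psi$, and induction in stages with Theorem~\ref{deg-whi}). One cosmetic remark: the paper later records the dual action as $(g_1,g_2)\cdot\psi_A=\psi_{g_2^{-1}Ag_1}$ rather than your $\psi_{g_2Ag_1^{-1}}$, a harmless difference of convention that does not affect the stabilizer computation.
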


We now return to the proof of Theorem \ref{linear-period}.

\begin{proof}
Let $\pi^{\deg}$ be the subspace of 
  $\pi$
on which $N$ operates by degenerate characters, thus:
$$ \pi = \pi^{\ndeg} \oplus \pi^{\deg}.$$

Clearly, $\pi^{\deg}$ is a module for $M=\GL_n(\F) \times \GL_n(\F)$.
  For the proof of Theorem \ref{linear-period}, it suffices by Corollary \ref{non-deg-cor} to prove that 
  $\pi^{\deg}$ as a module for $M=\GL_n(\F) \times \GL_n(\F)$ contains no characters of $M$.

  If $\lambda: N=\M_n(\F) \rightarrow \C^\times$ is any degenerate character of $\M_n(\F)$,
  let $M_\lambda$ be the
  subgroup of $M$ stabilizing the character $\lambda$. If $\pi_\lambda \subset \pi$ is the representation of $M_\lambda$
  consisting of those vectors in $\pi$ on which $N$ operates by  $\lambda: N=\M_n(\F) \rightarrow \C^\times$, then by
  Clifford theory,
  $$  \ind_{  M_\lambda}^{\GL_n(\F) \times \GL_n(\F)} (\pi_\lambda) $$
  is a submodule for $\pi$, and $\pi^{\deg}$ is a sum of these submodules.

  Thus it suffices to prove that none of the representations $  \ind_{  M_\lambda}^{\GL_n(\F) \times \GL_n(\F)} (\pi_\lambda) $
  contain a 1 dimensional representation, say $\chi$, of ${\GL_n(\F) \times \GL_n(\F)}$.
  By Frobenius reciprocity,
  $$\Hom_{\GL_n(\F) \times \GL_n(\F)}[\ind_{  M_\lambda}^{\GL_n(\F) \times \GL_n(\F)} (\pi_\lambda) , \chi] \cong
  \Hom_{ M_\lambda} [\pi_\lambda , \chi] .$$

  We next identify the set of degenerate characters $\lambda: \M_n(\F) \rightarrow \C^\times$ together with their stabilizers $M_\lambda
\subset \GL_n(\F) \times \GL_n(\F)$.

  Any character of $\M_n(\F)$
  is of the form $$\psi_A= \{X\rightarrow \psi({\rm tr}[AX]) | X \in \M_n(\F) \},$$
  where $\psi: \F\rightarrow \C^\times$ is a fixed non-trivial
character. Thus, it is seen that, up to the natural action of ${\GL_n(\F) \times \GL_n(\F)}$ on $\M_n(\F)$,
characters of $\M_n(\F)$ are represented by the rank of the matrix $A$ which is any intger $0\leq i \leq n$, with
$i=n$ representing non-degenerate characters. We will use as representatives of orbits of degenerate characters of $\M_n(\F)$ 
under the action of ${\GL_n(\F) \times \GL_n(\F)}$ on $\M_n(\F)$,
the matrices:

$$A_i = \left ( \begin{array}{cccc} 
  I_i & 0      \\
0 & 0 &  
\end{array}
\right ) \in \M_n(\F),$$
where $I_i$ represents the identity matrix in $\GL_i(\F)$.

Under the  action of ${\GL_n(\F) \times \GL_n(\F)}$ on $\M_n(\F)$
given by $(g_1,g_2) \cdot X = g_1Xg^{-1}_2$,
the associated action of ${\GL_n(\F) \times \GL_n(\F)}$ on the characters $\psi_A$, $A \in \M_n(\F)$
is given by $(g_1,g_2) \cdot \psi_A = \psi_{g^{-1}_2Ag_1}$. (The important point to notice here is the slight difference
in the action of ${\GL_n(\F) \times \GL_n(\F)}$ on $\M_n(\F)$ and on the character group of $\M_n(\F)$ which has been identified to $\M_n(\F)$; this
difference plays an important role below!)

It follows that under the action of ${\GL_n(\F) \times \GL_n(\F)}$
on the character $\lambda= \psi_{A_i}$, $A_i \in \M_n(\F)$ as above,   $M_\lambda \subset \GL_n(\F) \times \GL_n(\F)$
contains
$(1,g_2)$ with
$$g_2 = \left ( \begin{array}{cccc} 
  I_i & *      \\
0 & I_{n-i} &  
\end{array}
\right ) \in \GL_n(\F),$$
where $*$ represents arbitrary $i \times (n-i)$ matrices over $\F$.

Now observe that if   $\Hom_{ M_\lambda} [\pi_\lambda , \chi] \not = 0 $ for some character
 $\chi$ of ${\GL_n(\F) \times \GL_n(\F)}$,
then since the character $\chi$ of ${\GL_n(\F) \times \GL_n(\F)}$
  must be trivial on any unipotent subgroup of  ${\GL_n(\F) \times \GL_n(\F)}$,
  in our case $\lambda = \psi_{A_i}$, the character $\chi$ of $ M_\lambda$
  must be trivial on
  $(1,g_2) \in {\GL_n(\F) \times \GL_n(\F)}$, with
$$g_2 = \left ( \begin{array}{cccc} 
  I_i & *      \\
0 & I_{n-i} &  
\end{array}
\right ) \in \GL_n(\F),$$
where $*$ represents arbitrary $i \times (n-i)$ matrices over $\F$.
  
Since the character $\lambda = \psi_{A_i}: \M_n(\F) \rightarrow \C^\times$ itself is trivial on matrices of the form:
$$m = \left ( \begin{array}{cccc} 
 0 & *      \\
0 & * &  
\end{array}
\right ) \in \M_n(\F),$$
we find that $\pi$ (which contains $\pi_\lambda$ on which $N=\M_n(\F)$ operates by $\lambda$, and $M_\lambda$ which is contained in $ {\GL_n(\F) \times \GL_n(\F)}$ operates via $\chi$) contains a nonzero vector on which the unipotent group
of matrices:
$$ \left ( \begin{array}{ccccr} 
  I_i & 0   & 0  & *  \\
0  & I_{n-i}  & 0 & *  \\
 &  &   I_i & *  \\
& &  0 & I_{n-i}

\end{array}
\right ), $$
acts trivially.
Since this group of unipotent matrices is the unipotent radical of
the $(n+i, n-i)$ parabolic (with $i < n$), and the representation $\pi$ is given to be cuspidal, there can be no such nonzero vectors,
 completing  the proof of the theorem.
\end{proof}

\section{Twisted-linear periods for cuspidal representations} 

In this section we use the method of basechange developed in section 2 to study multiplicity questions
on one dimensional representations of $\GL_n(\E)$ inside a cuspidal representation of $\GL_{2n}(\F)$. (Since 
$\GL_n(\F) \times \GL_n(\F) \subset \GL_{2n}(\F)$ is often referred to as the `linear periods' case, we could call
$\GL_n(\E)  \subset \GL_{2n}(\F)$  as the `twisted-linear periods' case.)
For this,  the result obtained for principal series in section \ref{linear-ps}, and the result
obtained 
for cuspidal representations in section \ref{linear-cusp} will be used. We begin with an important result
which allows one to say that out of the two terms in Proposition \ref{princ} giving rise to multiplicity there,
only one term contributes.

  \begin{prop}
  \label{mult1}
  Let $\pi_1$ be an irreducible cuspidal representation of $\GL_{n}(\E)$
  where  as always, $\E$ is a quadratic extension of $\F$ with the non-trivial
  Galois automorphism $\sigma$. If,
   \[  \pi_1 ^\vee \cong  \pi_1 ^\sigma, \]
   then $n$ must be odd. If $n$ is odd, then an irreducible cuspidal representation $\pi_1 = \pi_1(\theta)$
   of $\GL_{n}(\E)$ associated to a character $\theta: \F_{q^{2n}}^\times \rightarrow \C^\times$ has the property
   $\pi_1 ^\vee \cong  \pi_1 ^\sigma,$ if and only if $\theta$ restricted   to $\F_{q^{n}}^\times$ is trivial.
\end{prop}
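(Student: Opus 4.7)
The plan is to translate the condition $\pi(\theta)^{\sigma}\cong\pi(\theta)^{\vee}$ into an arithmetic condition on the parametrizing character, and then use cuspidality (general position) to pin it down. Cuspidal representations of $\GL_n(\E)$ are parametrized by characters $\theta: \F_{q^{2n}}^\times\to\C^\times$ in general position under $\Gal(\F_{q^{2n}}/\E)=\langle x\mapsto x^{q^2}\rangle$, taken modulo this action; under this parametrization one has $\pi(\theta)^\vee=\pi(\theta^{-1})$, and $\pi(\theta)^\sigma=\pi(\theta^q)$, where $\theta^q$ denotes the character $x\mapsto\theta(x^q)$ obtained by composing $\theta$ with any lift of the nontrivial $\sigma\in\Gal(\E/\F)$ to $\F_{q^{2n}}$ (different lifts change $\theta^q$ by the $\langle x\mapsto x^{q^2}\rangle$-action, so give the same $\pi$).

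Consequently, $\pi^\sigma\cong\pi^\vee$ is equivalent to $\theta^q$ and $\theta^{-1}$ lying in a common $\langle x\mapsto x^{q^2}\rangle$-orbit, i.e.\ $\theta^{q+q^{2k}}=1$ in the character group of $\F_{q^{2n}}^\times$ for some $k\in\{0,1,\ldots,n-1\}$. Since $\gcd(q,q^{2n}-1)=1$, one can cancel the factor $q$ when $k\geq 1$ and rewrite the condition as $\theta^{q^m+1}=1$ for some odd $m\in\{1,3,\ldots,2n-3\}$ (the case $n=1$ is handled directly by $\theta^{q+1}=1$). Squaring the exponent yields $\theta^{q^{2m}}=\theta$; but general position ($\theta^{q^{2j}}\neq\theta$ for $1\leq j\leq n-1$) forces $n\mid m$. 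Combined with $m$ odd and $m\leq 2n-3$, this is impossible when $n$ is even, which proves the first assertion of the proposition, and when $n$ is odd it forces $m=n$, so the condition reduces to $\theta^{q^n+1}=1$.

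To conclude, the characters of the cyclic group $\F_{q^{2n}}^\times$ satisfying $\theta^{q^n+1}=1$ are exactly those trivial on the unique subgroup of order $q^n-1$, namely $\F_{q^n}^\times$; so $\theta^{q^n+1}=1$ is the same as $\theta|_{\F_{q^n}^\times}=1$, as claimed. The converse direction of the iff is automatic from the chain above — or explicitly: $\theta^{q^n+1}=1$ multiplies up to $\theta^{q+q^{n+1}}=1$, exhibiting $k=(n+1)/2\in\{0,\ldots,n-1\}$. The main obstacle is expected to be purely organizational, namely cleanly tracking the three interacting Galois actions — $\Gal(\E/\F)$ on the representation, $\Gal(\F_{q^{2n}}/\E)$ on parameters, and Frobenius on the character group $\Z/(q^{2n}-1)\Z$ — together with the harmless indeterminacies in lifting $\sigma$; once this bookkeeping is in place the proof reduces to the transparent divisibility argument above.
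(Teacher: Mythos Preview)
Your proof is correct and follows essentially the same route as the paper's. Both arguments translate $\pi_1^\vee\cong\pi_1^\sigma$ into a relation $\theta\cdot\theta^{\tau\sigma}=1$ for some lift $\tau\sigma$ of $\sigma$ to $\Gal(\F_{q^{2n}}/\F)$, then use general position to force $(\tau\sigma)^2=1$ and hence $n$ odd with $\tau\sigma$ the unique involution fixing $\F_{q^n}$. The only difference is cosmetic: the paper phrases this in terms of the element $\tau\sigma$ in the cyclic group $\Gal(\F_{q^{2n}}/\F)\cong\Z/2n$, while you phrase it in terms of the odd exponent $m$ with $\theta^{q^m+1}=1$ and the divisibility $n\mid m$; your $m$ is exactly the paper's $\tau\sigma$ written additively.
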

\begin{proof}
We analyze the condition $\pi_1 ^\vee \cong  \pi_1 ^\sigma,$ 
  using the fact that  two cuspidal representations $\pi_1(\theta_1)$ and $\pi_2(\theta_2)$ of $\GL_n(\E)$ arising out
  of characters $\theta_1,\theta_2: \F_{q^{2n}}^\times \rightarrow  \C^\times$ are isomorphic if and only if
  for some
  $\tau \in \Gal( \F_{q^{2n}}/\E)$, $\tau(\theta_1)=\theta_2$. We have been using $\sigma$ for the nontrivial element of
  $\Gal(\E/\F)$, but now we will also use $\sigma$ to denote any element of   $\Gal(\F_{q^{2n}}/\F)$ which projects to
  this nontrivial element of $\Gal(\E/\F)$.
  The isomorphism 
$\pi_1 ^\vee \cong  \pi_1 ^\sigma,$ 
  implies that for some
  $\tau \in \Gal( \F_{q^{2n}}/\E)$:
  \[ \tag{1} (\theta  \cdot \theta^{ \tau \sigma}) = 1. \]

  Applying $\tau \sigma$ to the equality in equation (1), we have:
 \[ \tag{2} (\theta^{\tau \sigma}  \cdot \theta^{ \tau^2 \sigma^2}) = 1. \]
 The two equations (1) and (2)  give:
   \[ \tag{3} \theta  =  \theta^{ \tau^2 \sigma^2}. \]

 Since $\theta$ gives rise to a cuspidal representation of $ \GL_{n}(\E)$, all its Galois conjugate under
 $\Gal( \F_{q^{2n}}/\E)$,
 are
  distinct, and therefore equation $(3)$ 
  implies that,   \[ \tag{4} (\tau \sigma)^2=1. \]

  Notice that $\tau \sigma \in \Gal( \F_{q^{2n}}/\F) = \Z/(2n)$ which projects to the nontrivial element of
  $ \Gal( \E/\F) =\Z/2$. Therefore, if $(\tau \sigma)^2=1$, it follows by group theoretic structure
  of $\Z/(2n)$, that $n$ must be an odd integer, i.e., the condition $\pi_1 ^\vee \cong  \pi_1^\sigma$ forces $n$ to be odd.

  Assuming $n$  odd, we now classify cuspidal representations $\pi_1=\pi_1(\theta)$ for which
  $\pi_1 ^\vee \cong  \pi_1^\sigma$ as in the statement of the proposition.
  
 Let $\sigma$  be the unique element of   $\Gal(\F_{q^{2n}}/\F)$ of order 2 (which automatically
  projects to
  the nontrivial element of $\Gal(\E/\F)$) whose fixed field defines $\F_{q^n}$. Since $n$ is odd, $\tau$ has odd order, and therefore by equation $(4)$, $\tau =1$.
  In this case, putting $\tau=1$ in the equality in equation $(1)$,
  we find   that
  $\theta$ restricted to $\F_{q^n}^{\times}$ is trivial,
  and conversely, if this is so, it is easy to see that
  $\pi_1 ^\vee \cong  \pi_1 ^\sigma$ 
  completing the proof of the proposition. 
\end{proof}

\begin{remark}
  The Proposition \ref{mult1} can be interpreted as implying that the image of the basechange map from $\U_n(\F)$ to $\GL_n(\E)$ contains a cuspidal representation if and only if $n$ is odd, a well-known conclusion.
  \end{remark}

The following theorem can be considered as  a contribution towards depth-zero
case of Conjecture 1 in \cite{PT}.

\begin{thm}\label{conj-PT}
  Let $\pi = \pi(\theta)$ be an irreducible cuspidal representation of $G=\GL_{2n}(\F)$, $n>1$.
  Assume that $\pi$ arises  from a character
  $\theta: \F_{q^{2n}}^\times \rightarrow \C^\times$.
Let $\E$ be a quadratic extension of $\F$, and $\chi : \E^\times \rightarrow \C^\times$, a character, thought of as a character of
 $H= \GL_{n}(\E)$
through the determinant map
$\det:  \GL_{n}(\E) \rightarrow \E^\times$.
Then the representation $\pi$ of $\GL_{2n}(\F)$ 
has a nonzero vector on which $\GL_n(\E)$
  operates by $\chi$  if and only if
  $\theta$ restricted to  $\F_{q^{n}}^\times$ arises from $\chi$ restricted to
    $\F^\times$ through the norm mapping   $\F_{q^{n}}^\times \rightarrow \F^\times$.
    In particular the condition that the character $\chi \circ \det$ of $\GL_n(\E)$, for
  $\chi: \E^\times \rightarrow  \C^\times$,  appears in $\pi$ depends only on $\chi$ restricted
  to $\F^\times$.  

  The dimension of the space of linear forms $m(\pi,\chi)$ when nonzero is 1.
\end{thm}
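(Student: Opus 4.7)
The plan is to transport the computation of $m(\pi,\chi)$ to the principal-series setting of Proposition~\ref{princ} via basechange. As recorded in the introduction, $\pi^\E$ is the principal series $\pi_0 \times \pi_0^\sigma$ of $\GL_{2n}(\E)$, where $\pi_0$ is the cuspidal representation of $\GL_n(\E)$ attached to the same character $\theta\colon \F_{q^{2n}}^\times \to \C^\times$. Viewing $H = \GL_n(\E)$ as the $\F$-points of $\operatorname{Res}_{\E/\F}\GL_n$, one has $H(\E) = \GL_n(\E) \times \GL_n(\E)$ with $\sigma$ acting via a twisted swap, and the basechange of the character $\chi \circ \det$ is $\chi \boxtimes \chi^\sigma$. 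By Corollary~\ref{mult-cor}, it suffices to show $m(\pi^\E, \chi^\E) \leq 1$ and identify when it equals $1$; the equality $m(\pi,\chi) = m(\pi^\E, \chi^\E)$ will then be automatic.

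I would then apply Proposition~\ref{princ} over $\E$ to $\pi_0 \times \pi_0^\sigma$ with character $\chi \boxtimes \chi^\sigma$, and analyze each of its two potential contributions.

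\emph{Case (1):} The hypothesis $(\pi_0 \otimes \chi^{-1})^\vee \cong \pi_0^\sigma \otimes (\chi^\sigma)^{-1}$ becomes $(\pi'')^\vee \cong (\pi'')^\sigma$ with $\pi'' = \pi_0 \otimes \chi^{-1}$. By Proposition~\ref{mult1} applied to $\pi''$, this forces $n$ odd and is equivalent to the parameter $\theta'' = \theta \cdot (\chi^{-1} \circ \Nm_{\F_{q^{2n}}/\F_{q^2}})$ of $\pi''$ being trivial on $\F_{q^n}^\times$. For $n$ odd, $\gcd(2,n) = 1$ makes $i \mapsto 2i \bmod n$ a bijection of $\Z/n$, so $\Nm_{\F_{q^{2n}}/\F_{q^2}}$ restricted to $\F_{q^n}^\times$ coincides with $\Nm_{\F_{q^n}/\F_q}$. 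The condition becomes $\theta|_{\F_{q^n}^\times}(x) = \chi(\Nm_{\F_{q^n}/\F_q}(x))$, and this case contributes multiplicity $1$.

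\emph{Case (2):} Here $n = 2m$ is even, and both $\pi_0$ and $\pi_0^\sigma$ must contain a vector on which $\GL_m(\E) \times \GL_m(\E)$ acts by $\chi \times \chi^\sigma$. Applying Theorem~\ref{linear-period} to $\pi_0$ (a cuspidal of $\GL_{2m}(\E)$) translates this to $(\chi \chi^\sigma)(\Nm_{\F_{q^{2m}}/\F_{q^2}}(y)) = \theta(y)$ for $y \in \F_{q^{2m}}^\times = \F_{q^n}^\times$; since $\chi \chi^\sigma = \chi \circ \Nm_{\E/\F}$ and norms compose transitively, this is again $\theta(y) = \chi(\Nm_{\F_{q^n}/\F_q}(y))$. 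The analogous condition for $\pi_0^\sigma$ reads $\chi(\Nm_{\F_{q^n}/\F_q}(y)) = \theta(y^q)$, which is equivalent to the same identity because $y \mapsto y^q$ is a bijection of $\F_{q^n}^\times$ and $\Nm_{\F_{q^n}/\F_q}$ is Frobenius-invariant. Theorem~\ref{linear-period} further gives multiplicity $\leq 1$ on each factor, so the product contributes $1 \cdot 1 = 1$.

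Since Case~(1) requires $n$ odd and Case~(2) requires $n$ even, at most one contributes for each $n$, and both impose the same condition on $\theta|_{\F_{q^n}^\times}$. Therefore $m(\pi^\E, \chi^\E) \in \{0,1\}$ equals $1$ precisely when $\theta|_{\F_{q^n}^\times} = \chi|_{\F^\times} \circ \Nm_{\F_{q^n}/\F_q}$, and Corollary~\ref{mult-cor} finishes the argument. The main technical obstacle is the careful bookkeeping: correctly identifying the basechange of $\chi \circ \det$ as $\chi \boxtimes \chi^\sigma$ (rather than $\chi \boxtimes \chi$, which would arise from the naive swap action on $H(\E)$), and verifying that the two structurally different cases of Proposition~\ref{princ} yield identical conditions on $\theta|_{\F_{q^n}^\times}$ through careful composition of norms among the tower $\F \subset \E \subset \F_{q^{2n}}$ with $\F_{q^n}$.
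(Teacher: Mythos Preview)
Your proof is correct and follows essentially the same route as the paper's: basechange to $\E$ turns $\pi$ into the principal series $\pi_1\times\pi_1^\sigma$, Proposition~\ref{princ} then splits the analysis into the two cases, Proposition~\ref{mult1} forces the parity dichotomy, Theorem~\ref{linear-period} handles the even case, and Corollary~\ref{mult-cor} brings the answer back down to $\F$. The only difference is that you spell out in detail the norm identities (e.g.\ that $\Nm_{\F_{q^{2n}}/\F_{q^2}}|_{\F_{q^n}^\times}=\Nm_{\F_{q^n}/\F_q}$ for $n$ odd, and the transitivity $\Nm_{\E/\F}\circ\Nm_{\F_{q^n}/\E}=\Nm_{\F_{q^n}/\F_q}$ for $n$ even) showing that both cases yield the \emph{same} condition $\theta|_{\F_{q^n}^\times}=\chi|_{\F^\times}\circ\Nm_{\F_{q^n}/\F_q}$, and you also verify explicitly that the condition for $\pi_0^\sigma$ is equivalent to that for $\pi_0$; the paper asserts these translations without writing them out.
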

\begin{proof}
  The proof of this theorem  will be based on
  Proposition \ref{mult} relating multiplicities under basechange,
  the calculation done in Proposition \ref{princ} regarding the multiplicity
  for a principal series representation, and Theorem \ref{linear-period} which calculates multiplicities for a
  cuspidal representation.

Write, as before, the representation
$ \pi^{\E} $
of $\GL_{2n}(\E)$ as  $\pi^\E =\pi_1 \times \pi_1^\sigma$
where $\pi_1$ is a cuspidal representation of $\GL_n(\E)$, arising from the same character
  $\theta: \F_{q^{2n}}^\times \rightarrow \C^\times$,
 and $\sigma$ is the Galois action on $\GL_n(\E)$.
 For $\chi: \E^\times \rightarrow \C^\times$, let $\chi^\E= \chi \times \chi^\sigma: \E^\times \times
  \E^\times \rightarrow \C^\times$.

 If $n$ is an odd integer with 
$m({\pi}^{\E}, {\chi}^{\E}) \not = 0$, then by Proposition \ref{princ}, we must have
$(\pi_1 \otimes \chi^{-1})^\vee \cong  (\pi_1 \otimes \chi^{-1})^\sigma$. By Proposition \ref{mult1},
   if $n$ is an odd integer,
   $(\pi_1 \otimes \chi^{-1})^\vee \cong  (\pi_1 \otimes \chi^{-1})^\sigma$
   if and only if 
$\theta$ restricted to $\F_{q^n}^{\times}$ arises from $\chi$ restricted to
   $\F^\times$ through the norm mapping   $\F_{q^{n}}^\times \rightarrow \F^\times$. In this case,
   $m({\pi}^{\E}, {\chi}^{\E}) \leq 1$, and since by Corollary \ref{mult-cor},
   $ m({\pi}, {\chi}) \leq m({\pi}^{\E}, {\chi}^{\E}) \leq 1$
   with  $ m({\pi}, {\chi}) \cong m({\pi}^{\E}, {\chi}^{\E}) $ modulo 2,
   $ m({\pi}, {\chi}) =  m({\pi}^{\E}, {\chi}^{\E}) $
  completing the proof of the proposition for $n$ odd, in particular, we have $m(\pi_1,\chi) \leq 1$ if $n$ is odd.

If $n$ is an even integer with 
$m({\pi}^{\E}, {\chi}^{\E}) \not = 0$, then since by Proposition \ref{mult1} we must have,
$$(\pi_1 \otimes \chi^{-1})^\vee \not \cong  (\pi_1 \otimes \chi^{-1})^\sigma,$$
by Proposition \ref{princ}, we must have $m(\pi_1, \chi \times \chi^\sigma) \not = 0$ which has been calculated in Theorem  \ref{linear-period}, giving conditions when $m(\pi_1, \chi \times \chi^\sigma) \not = 0$, and proving that
when non-zero, 
$m(\pi_1, \chi \times \chi^\sigma)  = 1$, and therefore by  Proposition \ref{princ}, $m({\pi}^{\E}, {\chi}^{\E})  = 1$, and
so also is  $m({\pi}, {\chi})  = 1$, completing the proof of the theorem.
 \end{proof}

\begin{cor}
  An irreducible cuspidal representation $\pi = \pi(\theta)$ of $\GL_{2n}(\F), n>1$ arising  from a character
  $\theta: \F_{q^{2n}}^\times \rightarrow \C^\times$
  is distinguished by $\GL_n(\E)$ (i.e., has a vector fixed by  $\GL_n(\E)$)
  if and only if one of the equivalent conditions hold:
\begin{enumerate}
\item  $\theta$ restricted to $\F_{q^{n}}^\times$ is trivial,
\item   the representation $\pi$ of $G=\GL_{2n}(\F)$ is self-dual.
\end{enumerate}

Further,  for a character $\chi: \E^\times \rightarrow \C^\times$,
an irreducible cuspidal representation $\pi$ of $\GL_{2n}(\F)$
contains the character  $\chi\circ \det: \GL_n(\E) \rightarrow \C^\times$
if and only if,
$$\pi \cong \pi^\vee \otimes \chi|_{\F^\times}.$$
\end{cor}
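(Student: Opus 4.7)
My plan is to deduce the corollary directly from Theorem \ref{conj-PT} by translating the norm condition on $\theta|_{\F^\times_{q^n}}$ into intrinsic properties of $\pi$, using the parametrization of cuspidal representations of $\GL_{2n}(\F)$ by regular characters $\theta : \F^\times_{q^{2n}} \to \C^\times$ modulo $\Gal(\F_{q^{2n}}/\F) \cong \Z/2n$.

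The first claim is immediate from the $\chi = 1$ case of Theorem \ref{conj-PT}, so the substantive work is the equivalence $(1) \Leftrightarrow (2)$. I would use $\pi(\theta)^\vee = \pi(\theta^{-1})$ together with $\pi(\theta_1) \cong \pi(\theta_2)$ iff $\theta_1, \theta_2$ lie in the same $\Gal(\F_{q^{2n}}/\F)$-orbit. Hence self-duality is $\theta^\tau = \theta^{-1}$ for some $\tau$; applying $\tau$ again and using the regularity of $\theta$ forces $\tau^2 = 1$, so $\tau \in \{\mathrm{id}, \phi^n\}$. The case $\tau = \mathrm{id}$ would give $\theta^2 = 1$, but then $\theta$ is either trivial or the unique quadratic character of $\F^\times_{q^{2n}}$ (which is Galois-fixed), contradicting regularity in either case. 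So $\tau = \phi^n$, which reads $\theta \circ \Nm_{\F_{q^{2n}}/\F_{q^n}} = 1$, equivalent by surjectivity of the norm to $(1)$.

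The second part runs the same dictionary twisted by $\chi$. Twisting by $\chi|_{\F^\times}$ via the determinant sends $\theta$ to $\theta \cdot \chi_0$, where $\chi_0 = (\chi|_{\F^\times}) \circ \Nm_{\F_{q^{2n}}/\F}$ is Galois-fixed because it factors through $\F^\times_q$. So $\pi \cong \pi^\vee \otimes \chi|_{\F^\times}$ becomes $\theta^\tau = \theta^{-1}\chi_0$ for some $\tau$, and the Galois-fixedness of $\chi_0$ allows the same $\tau^2 = 1$ analysis. The surviving case $\tau = \phi^n$ yields $\theta \cdot \theta^{\phi^n} = \chi_0$, and restricting to $\F^\times_{q^n}$ (using $\Nm_{\F_{q^{2n}}/\F} = \Nm_{\F_{q^n}/\F} \circ \Nm_{\F_{q^{2n}}/\F_{q^n}}$) is exactly the condition of Theorem \ref{conj-PT}.

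The main obstacle I anticipate is excluding the $\tau = \mathrm{id}$ branch in the second part, where a priori $\theta^2 = \chi_0$ does not contradict regularity directly. The resolution is the same idea as in the first part: $(\theta^{\phi^k}/\theta)^2 = 1$ for all $k$, so regularity forces $\theta^\phi/\theta = \eta$ (the unique quadratic character of $\F^\times_{q^{2n}}$); applying $\phi$ once more and using Galois-fixedness of $\eta$ yields $\theta^{\phi^2} = \theta$, which violates regularity as soon as $n > 1$.
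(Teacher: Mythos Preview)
Your proof is correct. The paper states this corollary without proof, leaving it as an immediate consequence of Theorem~\ref{conj-PT} together with the standard parametrization of cuspidal representations of $\GL_{2n}(\F)$ by $\Gal(\F_{q^{2n}}/\F)$-orbits of regular characters. Your argument supplies precisely the details the paper omits: the $\chi=1$ case of Theorem~\ref{conj-PT} gives condition~(1), and then you translate the norm condition on $\theta|_{\F^\times_{q^n}}$ into (twisted) self-duality of $\pi$ via the orbit description. Your handling of the only delicate point---excluding the $\tau=\mathrm{id}$ branch in the twisted case by using regularity and the Galois-invariance of the quadratic character, where the hypothesis $n>1$ is genuinely needed---is sound. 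One small clarification in your last paragraph: the passage from $\theta\cdot\theta^{\phi^n}=\chi_0$ to the condition of Theorem~\ref{conj-PT} is best phrased not as ``restricting to $\F^\times_{q^n}$'' but as observing that both sides factor through $\Nm_{\F_{q^{2n}}/\F_{q^n}}$ (the left side tautologically, the right side by transitivity of norms), after which surjectivity of that norm yields $\theta|_{\F^\times_{q^n}}=(\chi|_{\F^\times})\circ\Nm_{\F_{q^n}/\F}$ directly.
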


\begin{remark}
  It may be noted that Proposition \ref{conj-PT} is false for $n=1$. In this case, $m(\pi^\E,\chi^\E)$ corresponds to the
  multiplicity of a character of the split diagonal torus inside $\GL_2(\E)$ for a principal series representation of $\GL_2(\E)$ which can be $>1$, so the proof of the Proposition \ref{conj-PT} does not apply.
  To elaborate on this, observe that a cuspidal representation
  $\pi = \pi(\theta)$ of $\GL_2(\F)$ has dimension $(q-1)$, whereas there are $(q+1)$ characters of $\E^\times$ whose restriction to
  $\F^\times$ is $\theta|_{\F^\times}$ of which the two characters $\theta$ and $\theta^\sigma$ of $\E^\times$ do not appear
  in $\pi|_{\E^\times}$, and all the other $(q-1)$ appear with multiplicity 1 whereas if Proposition \ref{conj-PT} were valid for
  $n=1$ it would demand all characters of $\E^\times$ whose restriction to
  $\F^\times$ is $\theta|_{\F^\times}$ to appear
  in $\pi|_{\E^\times}$. 
  \end{remark}
\vspace{5mm}

\noindent{\bf Acknowledgement:}
The author must thank N. Matringe for pointing out a mistake in an earlier version
of Proposition \ref{princ}, and thanks the referee for a careful reading of the paper.

\vskip 10pt

\end{document}